\documentclass[12pt,a4paper]{article}

\usepackage{amsfonts}
\usepackage{amsmath}
\usepackage{amsbsy}
\usepackage{amsxtra}
\usepackage{latexsym}
\usepackage{amssymb}
\usepackage{cite}
\textwidth 16 cm \textheight 22 cm

\begin{document}
\title{How to project onto the monotone nonnegative cone using
Pool Adjacent Violators type algorithms
\thanks{{\it 1991 A M S Subject Classification.} Primary 90C33;
Secondary 15A48, {\it Key words and phrases.} Metric projection  onto the monotone nonnegative cone}}
\author{A. B. N\'emeth\\Faculty of Mathematics and Computer Science\\Babe\c s Bolyai University, Str. Kog\u alniceanu nr. 1-3\\RO-400084 Cluj-Napoca, Romania\\email: nemab@math.ubbcluj.ro \and S. Z. N\'emeth\\School of Mathematics, The University of Birmingham\\The Watson Building, Edgbaston\\Birmingham B15 2TT, United Kingdom\\email: nemeths@for.mat.bham.ac.uk}
\date{}
\maketitle

\begin{abstract}

The metric projection onto an order nonnegative cone from the metric projection onto the
corresponding order cone is derived. Particularly, we can use Pool Adjacent Violators-type 
algorithms developed for projecting onto the monotone cone for projecting onto the monotone 
nonnegative cone too.

\end{abstract}

\newenvironment{proof}{{\bf Proof.}}{\hfill$\Box$\\}
\newtheorem{theorem}{Theorem}
\newcommand{\R}{\mathbb R}
\newcommand{\N}{\mathbb N}
\newcommand{\lang}{\langle}
\newcommand{\rang}{\rangle}
\newcommand{\pe}{\preceq}

\newtheorem{corollary}{Corollary}

\section{Introduction}
The metric projection onto convex cones is an important tool in solving problems in
metric geometry, statistics, image reconstruction etc. In almost all applications the 
projection onto a convex cone is part of an iterative process, hence its efficiency is of 
crucial importance. 

Projecting onto an order cone \cite{FernandezRuedaSalvador1998} is a fundamental tool for 
solving isotonic regression problems (see \cite{RobertsonWrightDykstra1988}). We will call an 
order nonnegative cone the intersection of an order cone with
the nonnegative orthant. An order nonnegative cone correspond to an isotonic 
nonnegative regression problem which we will define from an isotonic regression problem by 
superimposing the nonnegativity of variables.

An order nonnegative cone is a pointed cone which is the subcone of the corresponding order
cone which is not pointed.

A special case of the isotone regression problem is the case of the regression with
respect to a complete order (\cite{RobertsonWrightDykstra1988}, \cite{BestChakravarti1990}).
The corresponding cone is called monotone cone. It  turns out
that the corresponding monotone nonnegative cone is important in the metric  geometry  and 
the image reconstruction. Whereby the importance of projecting onto this cone.

A
simple finite method of projection onto the so called isotone projection cones
(cones having the property that the metric projection onto them
is isotone with respect to the order relation defined by these cones) proposed by us (see
\cite{NemethNemeth2009}) has become important in the effective handling of
all the problems involving projection onto these cones.
The monotone nonnegative convex cone used in the Euclidean
distance geometry (see \cite{Dattorro2005}) is an isotone projection one. 
Our method has become important in the effective handling of 
the problem of map-making from relative distance information e.g., stellar cartography 
(see 
{\small
\begin{verbatim}
www.convexoptimization.com/wikimization/index.php/Projection_on_Polyhedral_Cone 
\end{verbatim}}

\noindent and Section 5.13.2.4 in \cite{Dattorro2005}).

Due to the importance for the regression theory of the projection onto the monotone cone, 
there are various efficient methods of projection onto it.
These methods emerge from the so called Pool Adjacent Violators (PAV) algorithm,
which has nothing to do with the geometric approach of \cite{NemethNemeth2009}.
The PAV algorithm exploits the specific feature of the monotone cone, and due to its efficiency
it would be desirable its adaptation for the monotone nonnegative cone too.
Our note aims to do this by joining PAV and the geometric approach
specific for \cite{NemethNemeth2009}.

In this note we show that the projection of a point onto an order nonnegative cone is the
positive part (with respect to the lattice structure of the nonnegative orthant in $\R^m$) of the 
projection of the point onto the corresponding order cone.

\section{Preliminaries}

Let $W$ be a \emph{convex cone} in $\R^m$, i. e., a nonempty set with
(i) $W+W\subset W$ and (ii) $tW\subset W,\;\forall \;t\in \R_+ =[0,+\infty)$.

The cone $K\subset \R^m$ is said \emph{pointed}, if $K\cap (-K)=\{0\}.$

The \emph{polar} of the convex cone $W$ is the set
$$W^{\perp}:=\{y\in \R^m:\;\lang x,y\rang \leq 0,\;\forall \;x\in K\},$$
where $\lang\cdot,\cdot\rang $ is a scalar product in $\R^m$.

If $W$ is a closed convex cone, then 
from the extended Farkas lemma (or bipolar theorem, see e.g. Theorem 14.1 in \cite{Rockafellar1970} p. 121) is
$$ W^{\perp \perp}=(W^\perp)^\perp = W.$$
If $Z$ is another closed convex cone, then $W$ and $Z$ are called
\emph{mutually polar} if $Z=W^\perp$ (and hence $W=Z^\perp$ by the lemma of Farkas).

The scalar product $\lang\cdot,\cdot\rang$ defines a metric $d$ on $\R^m$ by setting 
$d(x,y)=\lang x-y,x-y\rang^{1/2}$ for any $x,y\in\R^m$.
Denote by $P_W:\R^m\to W$ the \emph{projection} onto the closed convex cone $W$ (or the nearest point mapping), which
associates to $x\in \R^m$ its (unique with respect to the metric defined by the scalar product
$\lang\cdot,\cdot\rang$ \cite{Zarantonello1971}) nearest point $P_Wx\in W$ in $W$.

The projection mapping $P_W$ onto $W$ is 
characterized by the following theorem of Moreau \cite{Moreau1962}.
\vspace{2mm}

\noindent
\textbf{Theorem [Moreau]}\hspace{2mm}
\textit{Let $W,Z\subset \R^m$ be two mutually 
polar convex cones in $H$. Then, the following statements are equivalent:
\begin{enumerate}
	\item[(i)] $z=x+y,~x\in W,~y\in Z$ and $\langle x,y\rangle=0$,
	\item[(ii)] $x=P_Wz$ and $y=P_Zz$.
\end{enumerate}}

\section{Order cones and order nonnegative cones}

Suppose that $\R^m$ is endowed with a Cartesian coordinate system,
and $x\in \R^m$, $x=(x^1,\dots,x^m)$, where $x^i$ are the coordinates of
$x$ with respect to this reference system. 

Endow the index set $\{1,\dots,m\}$ with a partial order $\pe$. The \emph{order cone} 
\cite{FernandezRuedaSalvador1998} with respect to the partial order $\pe$ is defined by

\begin{equation*}
	W_\pe=\{x\in \R^m: x^i\le x^j\textrm{ whenever }i\pe j\}.
\end{equation*}

The \emph{order nonnegative cone} corresponding to the order cone $W_\pe$ 
is defined by

\begin{equation*}
	K_\pe=W_\pe\cap\R^m_+.
\end{equation*}

We obviously have 

\begin{equation}\label{posconsubscon}
K_\pe\subset W_\pe.
\end{equation}


\section{Projection onto an order nonnegative cone via the projection onto the corresponding 
order cone}

Let $b=(b^1,\dots,b^m)\in\R^m$ and $w=(w_1,\dots,w_m)\in\R^m_{++}$ a vector of positive
weights.
The partial order $\pe$ defines a directed acyclic graph over the nodes $\{1,\dots,m\}$ such
that $(i,j)\in E_\pe$ whenever $i\pe j$, where $E_\pe$ is the set of edges of the graph. Recall
that the
\emph{isotonic regression problem} is the following minimization problem:
\[
\begin{array}{ll}
	\textrm{Minimize}   & \sum_{i=1}^n w_i (x_i - b_i)^2\\
	\textrm{subject to} & x_i\le x_j,~\forall (i,j)\in E_\pe.
\end{array}
\]
We define the \emph{isotonic nonnegative regression} problem as the following 
minimization problem:
\[
\begin{array}{ll}
	\textrm{Minimize}   & \sum_{i=1}^n w_i (x_i - b_i)^2\\
	\textrm{subject to} & x_i\le x_j,~\forall (i,j)\in E_\pe,\\
			    & x_k\ge 0\textrm{ for }k=1,\dots,m.	
\end{array}
\]
If we define the scalar product $\langle\cdot,\cdot\rangle$ by 
$\langle x,y\rangle=w_1x_1y_1+\dots+w_mx_my_m$ for any $x,y\in\R^m$, then the isotonic 
regression problem is equivalent to projecting $b$ onto $W_\pe$. Similarly, the isotonic 
nonnegative regression problem is equivalent to projecting $b$ onto $K_\pe$. However, the 
following results hold for any scalar product $\langle\cdot,\cdot\rangle$ in $\R^m$.

For any vector $a=(a^1,\dots,a^m)\in\R^m$ denote by $a^+=\sup\{a,0\}$ its positive part with 
respect to the lattice structure of $\R^m_+$. Thus, if $m=1$, then $a^+=\max\{a,0\}$ and if 
$m>1$, then $a^+=(a^{1+},\dots,a^{m+})$, where $a^{i+}=(a^i)^+$ for any $i\in\{1,\dots,m\}$. 
Similarly denote by $a^-=\sup\{-a,0\}$ its negative part
with respect to the lattice structure of $\R^m_+$. Thus, if $m=1$, then $a^-=\max\{-a,0\}$ and 
if $m>1$, then $a^-=(a^{1-},\dots,a^{m-})$, where $a^{i-}=(a^i)^-$ for any $i\in\{1,\dots,m\}$.
It is easy to see that $a=a^+-a^-$.

Using the notation in the preceding section we have the 
following result: 

\begin{theorem}\label{fo}
For any $y\in \R^m$ we have $$P_{K_\pe}y=(P_{W_\pe}y)^+.$$
\end{theorem}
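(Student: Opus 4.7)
The strategy is to apply Moreau's theorem to the mutually polar pair $(K_\pe, K_\pe^\perp)$. Setting $x := P_{W_\pe}y$, I would try to exhibit the decomposition $y = x^+ + (y-x^+)$ with $x^+ \in K_\pe$, $y-x^+ \in K_\pe^\perp$ and $\langle x^+, y-x^+\rangle = 0$; Moreau then forces $P_{K_\pe}y = x^+$, which is the claim.

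The opening step is a pair of membership facts in $W_\pe$. Since $x \in W_\pe$ gives $x^i \le x^j$ whenever $i \pe j$, and since both $t \mapsto \max\{t,0\}$ and $t \mapsto \min\{t,0\} = -t^-$ are nondecreasing, the same inequalities pass to the coordinates of $x^+$ and of $-x^-$. Hence $x^+, -x^- \in W_\pe$, and as $x^+ \ge 0$ we already have $x^+ \in K_\pe$.

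The crux is a pair of orthogonality relations: $\langle x^+, y-x\rangle = 0$ and $\langle x^-, y-x\rangle = 0$. Moreau applied to $(W_\pe, W_\pe^\perp)$ gives $y-x \in W_\pe^\perp$ together with $\langle x, y-x\rangle = 0$. Pairing $y-x$ with $x^+ \in W_\pe$ yields $\langle x^+, y-x\rangle \le 0$, while pairing with $-x^- \in W_\pe$ yields $\langle x^-, y-x\rangle \ge 0$. Combined with $\langle x^+, y-x\rangle - \langle x^-, y-x\rangle = \langle x, y-x\rangle = 0$, the two inner products coincide, and the inequalities force them both to be zero.

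Now I would write $y - x^+ = (y-x) - x^-$ via $x = x^+ - x^-$. Then $\langle x^+, y-x^+\rangle = \langle x^+, y-x\rangle - \langle x^+, x^-\rangle = 0$, the first term by the previous step and the second by the disjoint-support identity $\langle x^+, x^-\rangle = 0$ of the positive and negative parts in the underlying (weighted) inner product. For $y-x^+ \in K_\pe^\perp$, take any $u \in K_\pe$; the inclusion $K_\pe \subset W_\pe$ gives $\langle u, y-x\rangle \le 0$, while $u, x^- \in \R^m_+$ gives $\langle u, x^-\rangle \ge 0$, so $\langle u, y-x^+\rangle = \langle u, y-x\rangle - \langle u, x^-\rangle \le 0$. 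Moreau's theorem for $(K_\pe, K_\pe^\perp)$ then finishes the argument. I expect the main obstacle to be spotting that simultaneous membership of both $x^+$ and $-x^-$ in $W_\pe$ must be combined with the single identity $\langle x, y-x\rangle = 0$ in order to pin down the two separate orthogonalities; once that trick is in place, the rest is bookkeeping with nonnegativity.
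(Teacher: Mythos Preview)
Your proposal is correct and follows essentially the same route as the paper: apply Moreau to $(W_\pe,W_\pe^\perp)$, use the monotonicity of $t\mapsto t^+$ and $t\mapsto -t^-$ to place both $x^+$ and $-x^-$ in $W_\pe$, split the single orthogonality $\langle x,y-x\rangle=0$ into $\langle x^+,y-x\rangle=\langle x^-,y-x\rangle=0$, and then verify the Moreau conditions for $(K_\pe,K_\pe^\perp)$ via $y=x^++\bigl((y-x)-x^-\bigr)$. The only cosmetic difference is that the paper phrases the polar membership as $-x^-\in K_\pe^\perp$ and $y-x\in W_\pe^\perp\subset K_\pe^\perp$ separately, whereas you test $\langle u,y-x^+\rangle\le0$ directly; your explicit caveat that $\langle x^+,x^-\rangle=0$ and $\langle u,x^-\rangle\ge0$ rely on the weighted (diagonal) inner product is in fact a point the paper uses tacitly.
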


\begin{proof} 
We have from Moreau's theorem that
\begin{equation}\label{mormor}
	y=P_{W_\pe}y+P_{W_\pe^\perp}y,\;\;\lang P_{W_\pe}y,P_{W_\pe^\perp}y\rang =0.
\end{equation}
Denote $v=(P_{W_\pe}y)^+$. Using the notations in the theorem, let $u=P_{W_\pe}y-v,$ that is,
$$u=-(P_{W_\pe}y)^-.$$

Denote $z=P_{W_\pe^\perp}y$.
Then, (\ref{mormor}) becomes
\begin{equation}\label{mormor1}
	y=u+v+z \;\textrm{with}\;\lang u+v,z\rang =0.
\end{equation}

First we show that from the special form of $W_\pe$, we have $u,v\in W_\pe$. Indeed, if we 
denote $x=P_{W_\pe}y$, then \[u=(-x^{1-},\dots,-x^{m-})\] and 
\[v=(x^{1+},\dots,x^{m+}).\] It is easy to see that the functions $\R\ni t\mapsto -t^-$ and 
$\R\ni t\mapsto t^+$ are monotone increasing. Since $x\in W_\pe$ we have $x^i\le x^j$ whenever
$i\pe j$. Hence, by using the monotonicity of $\R\ni t\mapsto -t^-$, we get 
$u^i\le u^j$ whenever $i\pe j$. Similarly, by using the monotonicity of $\R\ni t\mapsto t^+$, 
we get $v^i\le v^j$ whenever $i\pe j$. Thus, $u,v\in W_\pe$ which together with 
(\ref{mormor1}) and $z\in W_\pe^{\perp}$ yield 
\begin{equation}\label{ketvmer}
	\lang u,z\rang =\lang v,z\rang=0.
\end{equation}

From $K_\pe\subset W_\pe$ (see (\ref{posconsubscon})), it follows that
\begin{equation}\label{antitonic} 
	W_\pe^{\perp}\subset K_\pe^{\perp}.
\end{equation}

From the fact that all the coordinates of $u$ are nonpositive
and the elements in $K_\pe$ have nonnegative coordinates, it follows
that 
\begin{equation}\label{v2ikm}
	u\in K_\pe^{\perp}.
\end{equation}
 
Let us write now 
$$y=v+(u+z)$$
and observe that $v\in K_\pe.$ Further $u+z\in K^{\perp}$
from (\ref{antitonic}) and (\ref{v2ikm}).
We also have 
$$\lang v,u+z\rang =\lang v,u\rang +\lang v,z\rang =0,$$
because of (\ref{ketvmer}) and the forms of $u$ and $v$.

Using again the theorem of Moreau, it
follows the conclusion of the theorem.

\end{proof}

\section{Projection onto the monotone nonnegative cone
via the projection onto the monotone cone}

There are various efficient methods for projecting onto the monotone cone
emerging from the PAV algorithm (see e. g. \cite{BestChakravarti1990}).
These methods are intimately
related to the special structure of the monotone cone, and their
justification is more function theoretic than geometric.
Due to their simplicity their usage is desirable for projection onto
the monotone nonnegative cone too.

As a consequence of Theorem \ref{fo} we have the following corollary; 

\begin{corollary}
Suppose that $W$ is the monotone cone, that is,
\begin{equation*}
W=\{x\in \R^m:\;x^1\leq x^2\leq ...\leq x^m\},
\end{equation*}
and 
\begin{equation*}
K=\{x\in \R^m:\;0\leq x^1\leq x^2\leq ...\leq x^m\} 
\end{equation*}
is the monotone nonnegative cone.
Then for an arbitrary $y\in \R^m$ it holds
$$P_Ky=(P_Wy)^+,$$ 
where $^+$ stands for the lattice operation defined
by the order induced by the nonnegative orthant in $\R^m.$
\end{corollary}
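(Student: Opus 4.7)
The plan is to recognize the corollary as a direct specialization of Theorem \ref{fo}, so no new analytic work is required; the task is simply to identify the monotone cone and the monotone nonnegative cone as instances of $W_\pe$ and $K_\pe$ for a particular choice of partial order $\pe$, and then invoke the theorem.

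First, I would endow the index set $\{1,\dots,m\}$ with the complete (linear) order $1 \pe 2 \pe \dots \pe m$. With this choice, the defining inequalities $x^i \le x^j$ whenever $i \pe j$ reduce exactly to $x^1 \le x^2 \le \dots \le x^m$, so $W_\pe$ coincides with the monotone cone $W$ of the corollary statement. Consequently, by definition, the order nonnegative cone $K_\pe = W_\pe \cap \R^m_+$ coincides with the monotone nonnegative cone $K$ given in the corollary.

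Once these identifications are recorded, Theorem \ref{fo} applied to this particular $\pe$ yields $P_K y = P_{K_\pe} y = (P_{W_\pe} y)^+ = (P_W y)^+$ for any $y \in \R^m$, which is exactly the claimed equality. The positive part here is taken coordinatewise with respect to the lattice order induced by $\R^m_+$, matching the convention used in Theorem \ref{fo}.

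There is essentially no obstacle: the only thing to check is that the total order fits the framework of partial orders considered in the previous section, which is immediate since every total order is in particular a partial order. No verification of isotone projection properties or of PAV-specific structure is needed for the corollary itself; the efficiency benefit (namely, being able to feed a PAV-type subroutine for $P_W$ into the formula to obtain $P_K$) is a separate algorithmic remark rather than part of the mathematical proof.
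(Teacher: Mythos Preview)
Your proposal is correct and matches the paper's approach: the corollary is stated as a direct consequence of Theorem~\ref{fo}, with the monotone cone arising from the complete order $1\pe 2\pe\cdots\pe m$ on the index set. The paper does not even spell out the identification explicitly, so your write-up is if anything slightly more detailed than the original.
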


To exploit the efficiency of PAV-type algorithms in projecting onto
the monotone nonnegative cone, we can proceed as follows:
For an arbitrary $y\in \R^m$ we
can determine $P_Wy=(x^1,...,x^m)$, the projection of $y$ on the monotone cone $W$,  by
using a PAV-type algorithm (e.g. the algorithm in \cite{deLeeuwHornikMair2009}).
Then, take the vector
$$v=(x^{1+},...,x^{m+}),$$
where $x^{i+}$ denotes the ``positive part'' of the coordinate $x^i$.
Then, according the above corollary,
the projection of $y$ on the monotone nonnegative cone $K\subset W$
engendering $W$ is given by   
$$P_Ky=v.$$



\begin{thebibliography}{99}

\bibitem{BestChakravarti1990}
M. J. Best, N. Chakravarti, Active set algorithms for isotonic regression; An unifying framework, 
Math. Programming, 
47 (1990), 425-439.  


\bibitem{Dattorro2005}
J. Dattorro, Convex Optimization and Euclidean Distance Geometry, 
$\mathcal M\varepsilon\beta$oo
publishing v2011.04.25.

\bibitem{deLeeuwHornikMair2009}
J. de Leeuw, K. Hornik, P. Mair, Isotone optimization in R: Pool-Adjacent-Violators Algorithm (PAVA)
and active set methods,
Journ. Statistical Software
 32 (5) (2009), 1-24.

\bibitem{FernandezRuedaSalvador1998}
M. A. Fern\'andez, C. Rueda, B. Salvador, Simultaneous estimation by isotonic regression, 
J. Statist. Plann. Inference 70~(1) (1998), 111-119.

\bibitem{Moreau1962}
J. J. Moreau, D\'ecomposition orthogonale d'un espace hilbertien selon deux
c\^ones mutuellement polaires,
C. R. Acad. Sci.(1962),238-240.  

\bibitem{NemethNemeth2009}
S.~Z. N\'emeth, A.~B. N\'emeth, How to project onto an isotone projection cone. Linear Alg. 
Appl., 433~(1) (2010), 41-51.  

\bibitem{RobertsonWrightDykstra1988}
T. Robertson, F. T. Wright and R. L. Dysktra,
Order Restricted Statistical Inference, Wiley, New-York, 1988.

\bibitem{Rockafellar1970}
R. T. Rockafellar, {\em Convex Analysis},
\newblock Princeton: Princeton Univ. Press, 1970.

\bibitem{Zarantonello1971}
E. Zarantonello, Projections on Convex Sets in {Hilbert} Space and
  Spectral Theory, {I}: {P}rojections on Convex Sets, {II}: {S}pectral Theory.
\newblock {\em Contrib. Nonlin. Functional Analysis, Proc. Sympos. Univ.
  Wisconsin, Madison}, 1971, pp. 237--424.




\end{thebibliography}
\end{document}